\newcommand{\Aut}[1]{\text{Aut}(#1)}
\newtheorem{theorem}{Theorem}[section]
\theoremstyle{definition}
\newtheorem{proposition}{Proposition}[section]
\newcommand{\myname}{Alejandra Brewer\footnote{Florida Southern College; breweralie@gmail.com}, 
Adam Gregory\footnote{Western Carolina University; gregoryadam9@gmail.com}, 
Quindel Jones\footnote{Jackson State University; quindel.d.jones@gmail.com}, 
Luke Rodriguez\footnote{University of Wisconsin; rodriglr@uw.edu}, \\ 
Rigoberto Fl\'{o}rez\footnote{The Citadel, rigoflorez@gmail.com}, 
and Darren Narayan\footnote{Rochester Institute of Technology, dansma@rit.edu}
}
\newcommand{\course}{Infinite Families of Asymmetric Graphs 
}
\title{\course}
\author{\myname}
\date{14 August 2018}
\begin{document}
\maketitle
%------------------------------------------------------------------------
\begin{abstract}
A graph $G$ is \textit{asymmetric} if its automorphism group of vertices is trivial. Asymmetric graphs were introduced by Erd\H{o}s and R\'{e}nyi in 1963. They showed that the probability of a graph on $n$ vertices being asymmetric tends to $1$ as $n$ tends to infinity. In this paper, we first give consider the number of asymmetric trees, a question posed by Erd\H{o}s and R\'enyi. We give a partial result, showing that the number of asymmetric subdivided stars is approximately $q(n-1) - \lfloor \frac{n-1}{2} \rfloor$ where $q(n)$ is the number of ways to sum to $n$ using distinct positive integers, found by Hardy and Ramanujan in 1918. We also investigate cubic Hamiltonian graphs where asymmetry, at least for small values of $n$, seems to be rare. It is known that none of the cubic Hamiltonian graphs on $4\leq n\leq 10$ vertices are asymmetric, and of the $80$ cubic Hamiltonian graphs on $12$ vertices only $5$ are asymmetric. We give a construction of an infinite family of cubic Hamiltonian graphs that are asymmetric. Then we present an infinite family of quartic Hamiltonian graphs that are asymmetric. We use both of the above results for cubic and quartic asymmetric Hamiltonian graphs to establish the existence of $k$-regular asymmetric Hamiltonian graphs for all $k\geq 3$. 

\indent
\end{abstract}

%------------------------------------------------------------------------
\section{Introduction}
	
\hspace{10pt} We consider undirected graphs without multiple edges or loops. A graph $G$ is \textit{asymmetric} if its automorphism group of vertices is trivial. Asymmetric graphs were introduced by Erd\H{o}s and R\'{e}nyi in 1963. They showed that the probability of a graph on $n$ vertices being asymmetric tends to $1$ as $n$ tends to infinity. In this paper, we investigate cubic Hamiltonian graphs where asymmetry, at least for small values of $n$, seems to be rare. It is known that none of the cubic Hamiltonian graphs on $4\leq n\leq 10$ vertices are asymmetric, and of the $80$ cubic Hamiltonian graphs on $12$ vertices only $5$ are asymmetric \cite{Yutsis}. 
	
We will use $n$ to denote the number of vertices in a graph. For a graph $G$ we will use $V(G)$ to denote the set of vertices in $G$, and $E(G)$ to denote the set of edges in $G$. The edge between vertices $u$ and $v$ will be denoted $uv$. Two graphs $G$ and $H$ are \textit{isomorphic} if there is a bijection $f:G \rightarrow H$ where $uv \in E(G) \Leftrightarrow f(u)f(v) \in E(H)$. Recall that $f$ is an \textit{automorphism} if it is an isomorphism from a graph to itself, and the set of all automorphisms of a graph form an algebraic group under function composition. We will use $\Aut{H}$ to denote the automorphism group of vertices in a graph $H$. The \textit{complement} of a graph $G$ will be denoted $\overline{G}$. We will use $C_{n}$ will denote a cycle on $n$ vertices; $K_{s,t}$ to denote a complete bipartite graph where one part has $s$ vertices and the other part has $t$ vertices; and $P_{n}$ will denote a path on $n$ vertices. The \textit{degree} of a vertex $v$ is the number of edges incident to $v$.  A graph is $k$\textit{-regular} if each vertex has degree $k$. An edge-subdivision of an edge $uv$ is performed by replacing the edge $uv$ by two edges $uw$ and $wv$, where $w$ is a new vertex. A subdivided star is a graph that can be obtained by subdividing one more more edges of the star $K_{1,n-1}$. The \textit{distance} between two vertices $u$ and $v$ is the number of edges in a shortest path between $u$ and $v$. The distance between an edge and a subgraph will be the minimum vertex distance between a vertex incident to the edge and a vertex in the subgraph. For any undefined notation, please see the text \cite{West} by West. 
	
    In this paper we first investigate asymmetric trees. We show that the number of asymmetric subdivided stars on $n$ vertices equals the number of partitions of an integer into at least three distinct parts, a problem studied by Hardy and Ramanujan \cite{Hardy} in 1918. Furthermore, we identify the smallest asymmetric tree that is not a subdivided star.
    
We also give a construction of an infinite family of cubic Hamiltonian graphs that are asymmetric. Then we present an infinite family of quartic Hamiltonian graphs that are asymmetric. We use both of the above results to establish the existence of $k$-regular asymmetric Hamiltonian graphs for all $k\geq 3$.

A vertex is \textit{unique} if all of its properties are different from the properties of all of the other vertices in the graph. A graph $G$ can be shown to have a trivial automorphism group if each vertex is unique. For any graph $G$, two edges $e_1,e_2 \in E(G)$ are said to be \textit{different} if and only if $G -\{e_1\} \ncong G - \{e_2\}$. It is known that if $G$ has no component isomorphic to $P_{2}$, then $\Aut{V(G)} \cong \Aut{E(G)}$.  In this way, one could alternatively show that each edge in $G$ is different. In the paper, we will show that two edges are different by showing that one edge has a property (e.g. included in a particular type of subgraph, or is incident to a vertex that is unique) that another edge does not. 

\begin{proposition}
Given any graph $G$, $\Aut{G} = \Aut{\overline{G}}$.
\end{proposition}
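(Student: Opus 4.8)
The plan is to show that $\Aut{G}$ and $\Aut{\overline{G}}$ are equal as groups by proving they have exactly the same elements, noting that the group operation in both cases is ordinary function composition. The crucial observation is that $G$ and $\overline{G}$ share the same vertex set $V := V(G) = V(\overline{G})$, so every candidate automorphism of either graph is a bijection $f \colon V \rightarrow V$ on the same underlying set. It therefore suffices to fix an arbitrary bijection $f \colon V \rightarrow V$ and prove the biconditional $f \in \Aut{G} \Leftrightarrow f \in \Aut{\overline{G}}$.

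First I would unwind the definitions. By the definition of automorphism given above, $f \in \Aut{G}$ means that for all distinct $u,v \in V$ we have $uv \in E(G) \Leftrightarrow f(u)f(v) \in E(G)$, and likewise $f \in \Aut{\overline{G}}$ means $uv \in E(\overline{G}) \Leftrightarrow f(u)f(v) \in E(\overline{G})$. Next I would invoke the definition of the complement: for distinct $u,v \in V$, an edge $uv$ lies in $E(\overline{G})$ if and only if $uv \notin E(G)$. Substituting this into the condition for $\overline{G}$ converts it into the statement $uv \notin E(G) \Leftrightarrow f(u)f(v) \notin E(G)$ for all distinct $u,v$.

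The heart of the argument is then the elementary logical fact that a biconditional $P \Leftrightarrow Q$ is equivalent to the biconditional of its negations $\neg P \Leftrightarrow \neg Q$. Applying this with $P$ the statement $uv \in E(G)$ and $Q$ the statement $f(u)f(v) \in E(G)$ shows that the edge-preservation condition for $G$ holds for a given pair $(u,v)$ if and only if the corresponding condition for $\overline{G}$ holds for that same pair. Quantifying over all distinct pairs yields $f \in \Aut{G} \Leftrightarrow f \in \Aut{\overline{G}}$, so the two automorphism groups consist of precisely the same bijections.

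I expect the argument to be essentially routine, with no genuinely hard step; the only points requiring care are bookkeeping rather than mathematical depth. Specifically, I would be careful to restrict attention to distinct vertices $u \neq v$ throughout, since the complement is defined on the non-edges among distinct pairs and loops are excluded, and I would note explicitly that equality of the underlying sets of bijections together with the shared composition operation gives equality of the groups, not merely of their element sets. With these observations in place the conclusion $\Aut{G} = \Aut{\overline{G}}$ follows immediately.
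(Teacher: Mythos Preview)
Your proposal is correct and follows essentially the same approach as the paper: both arguments rest on the observation that, by definition of the complement, a bijection preserves adjacency in $G$ if and only if it preserves non-adjacency, which is exactly adjacency in $\overline{G}$. The paper phrases this as a chain of implications to obtain one containment and then invokes symmetry, while you handle both directions at once via the equivalence $P \Leftrightarrow Q$ iff $\neg P \Leftrightarrow \neg Q$; the underlying idea is identical.
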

\vspace{-12pt}
\begin{proof}
Let $\sigma \in \Aut{G}$. Then $e \in E(\overline{G})\Rightarrow e\notin E(G)\Rightarrow \sigma (e)\notin E(G)\Rightarrow \sigma (e)\in E(\overline{G})$. This implies $\sigma \in \Aut{\overline{G}}$ and hence $\Aut{G} \subseteq \Aut{\overline{G}}$. Using a similar argument, we can show $\Aut{\overline{G}} \subseteq \Aut{G}$, which completes the proof.
\end{proof}
	
\begin{theorem}Every asymmetric graph on $n$ vertices can be extended to an asymmetric graph on $n+1$ vertices. 
\end{theorem}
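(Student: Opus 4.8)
The plan is to build the larger graph by adjoining a single new vertex $v$ to $G$ and choosing its neighborhood so that $v$ is forced to be a fixed point of every automorphism. The cleanest way to guarantee this is to make the degree of $v$ extremal, and then to lean on the Proposition above so that one construction together with its complementary version covers all cases. Concretely, I would consider two dual constructions: adjoining $v$ as an isolated vertex, and adjoining $v$ as a universal vertex joined to all of $G$.

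First I would try the simplest construction: let $G'$ be $G$ together with a new isolated vertex $v$. If $G$ has no isolated vertex, then $v$ is the unique vertex of degree $0$ in $G'$; since any automorphism preserves degree, every $\sigma \in \Aut{G'}$ must fix $v$ and therefore restricts to an automorphism of $G$. Because $G$ is asymmetric, this restriction is the identity, so $\sigma$ is the identity and $G'$ is asymmetric. The dual construction then handles the remaining case: let $G''$ be $G$ together with a new vertex $v$ joined to every vertex of $G$, and observe that $\overline{G''}$ is exactly $\overline{G}$ with an added isolated vertex. By the Proposition, $\Aut{G''} = \Aut{\overline{G''}}$ and $\Aut{\overline{G}} = \Aut{G}$, so the isolated-vertex argument applied to $\overline{G}$ shows that $G''$ is asymmetric whenever $\overline{G}$ has no isolated vertex, i.e. whenever $G$ has no universal (degree $n-1$) vertex.

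The step I expect to be the real content — rather than the main obstacle — is recognizing that these two failure modes are complementary. Neither construction works in isolation: adding an isolated vertex fails when $G$ already has one, and adding a universal vertex fails when $G$ already has one. To close the argument I would prove that for $n \geq 2$ a graph cannot simultaneously possess an isolated vertex and a universal vertex, since the universal vertex would have to be adjacent to the isolated one, a contradiction. Hence at least one of the two constructions always applies. The one genuine caveat to flag is the degenerate case $n=1$: the single-vertex graph is asymmetric, yet both graphs on two vertices admit a transposition, so the statement is best read as applying to the asymmetric graphs that actually occur (the smallest nontrivial ones having six vertices), with the inductive step above establishing the general case for $n \geq 2$.
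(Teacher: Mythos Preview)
Your argument is correct and takes a genuinely different route from the paper's. The paper adds a \emph{pendant} vertex in both cases: if $G$ has no vertex of degree one it attaches the new vertex to a vertex of maximum degree, and if $G$ already has a leaf it attaches the new vertex to a specially chosen leaf (the degree-one vertex farthest from any vertex of degree greater than two); in each branch the new vertex has degree~$1$ and the paper argues directly that it and its neighbor are forced to be fixed. You instead make the new vertex extremal at the other end of the degree spectrum---degree $0$ or degree $n$---and use the complementation Proposition so that the two constructions become dual, reducing everything to the single observation that an isolated and a universal vertex cannot coexist when $n\ge 2$. Your proof is cleaner and actually uses the Proposition just established; the paper's construction, however, always keeps the new vertex adjacent to $G$ and hence preserves connectedness---in particular it sends trees to trees, a fact the authors invoke in Section~2 (``every asymmetric tree on $n$ vertices can be extended to an asymmetric tree on $n+1$ vertices''). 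Your isolated-or-universal extension proves the theorem as stated but would not support that later remark. Your caveat about $n=1$ is apt and applies equally to the paper's proof.
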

\vspace{-12pt}
\begin{proof}
Let $G$ be an asymmetric graph. If $G$ has no vertex of degree one, then we can add a new vertex $w$ along with the edge $wv$, where $v$ is a vertex of largest degree. Note that all of the vertices that were unique in $G-v$ will remain unique. The vertex $v$ will be unique since it has the largest degree, and the vertex $w$ is unique since it has the smallest degree. \\ 
{\indent} If $G$ has a vertex of degree one, let $u$ be a vertex with degree one and the greatest distance from a vertex of degree greater than two. Then we can add a new vertex $x$ along with an edge $xu$. Note that all of the vertices that were unique in $G-u$ will remain unique. The vertex $u$ will be unique since it is the only vertex that is adjacent to a vertex of degree one, and vertex $x$ is unique since it is the only vertex of degree one.
\end{proof}

In Section 2, we give results for the enumeration of a class of asymmetric trees. In Section 3, we investigate asymmetric cubic Hamiltonian graphs and present an infinite family of these graphs. In Section 4, we investigate asymmetric quartic Hamiltonian graphs and again provide an infinite family.  Lastly, Section 5 summarizes the results of the paper and states open problems.

 \section{Trees and Subdivided Stars}
The smallest asymmetric tree was given by Erd\H{o}s and R\'{e}nyi in \cite{Erdos} and is depicted in Figure 1. 

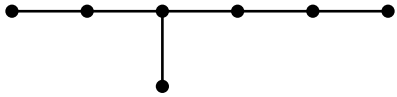
\begin{figure}[h!]
\begin{center}
\begin{tikzpicture}[node distance = 0.1cm, line width = 1pt]
\coordinate (1) at (0,0);
\coordinate (2) at (1,0);
\coordinate (3) at (2,0);
\coordinate (4) at (3,0);
\coordinate (5) at (4,0);
\coordinate (6) at (5,0);
\coordinate (7) at (2,-1);

\draw(1)--(2);
\draw(2)--(3);
\draw(3)--(4);
\draw(3)--(7);
\draw(4)--(5);
\draw(5)--(6);

\foreach \point in {1,2,3,4,5,6,7} \fill (\point) circle (2.5pt);
\end{tikzpicture}
\end{center}
\caption{The smallest asymmetric tree}
\end{figure} 

Notice that the smallest asymmetric tree is a subdivided star with $n=7$. We use $T_{n_1, n_2, \ldots, n_k}$ to denote a subdivided star with a vertex of degree $k$, which we refer to as the center vertex,  and pendant paths $P_{n_1}, P_{n_2}, \ldots,P_{n_k}$ extending from the center vertex.
Hence, the graph shown in Figure 1 will be denoted $T_{1,2,3}$. If we continue to extend any pendant path of the subdivided star on seven vertices, such that no two paths are the same length, we build an asymmetric subdivided star on $n$ vertices. Figure 2 depicts the asymmetric subdivided star $T_{1,2,4}$ when $n=8$. Notice that each pendant path has a distinct length. 

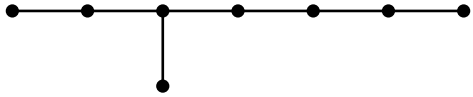
\begin{figure}[h!]
\begin{center}
\begin{tikzpicture}[node distance = 0.1cm, line width = 1pt]
\coordinate (1) at (0,0);
\coordinate (2) at (1,0);
\coordinate (3) at (2,0);
\coordinate (4) at (3,0);
\coordinate (5) at (4,0);
\coordinate (6) at (5,0);
\coordinate (7) at (2,-1);
\coordinate (8) at (6,0);

\draw(1)--(2);
\draw(2)--(3);
\draw(3)--(4);
\draw(3)--(7);
\draw(4)--(5);
\draw(5)--(6);
\draw(6)--(8);

\foreach \point in {1,2,3,4,5,6,7,8} \fill (\point) circle (2.5pt);
\end{tikzpicture}
\end{center}
\caption{The only asymmetric tree on eight vertices}
\end{figure} 

\begin{theorem}
If $G = T_{n_1, n_2, \ldots, n_k}$ where $k \geq 3$ and $n_1, n_2, \ldots, n_k$ are all unique, then $G$ is asymmetric. 
\end{theorem}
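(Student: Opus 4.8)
The plan is to show that every automorphism $\sigma$ of $G$ must be the identity map, by pinning vertices down one at a time starting from the center and working outward along each pendant path. The whole argument is a ``rigidity'' argument: a fixed vertex forces its distinguished neighbors to be fixed.

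First I would single out the center vertex $c$ as the unique vertex of degree at least $3$. Since $k \geq 3$, the center has degree $k \geq 3$, while every other vertex lies on a pendant path and therefore has degree at most $2$. Because automorphisms preserve degree, any $\sigma \in \Aut{G}$ must fix $c$. (This is precisely where the hypothesis $k \geq 3$ is used: if $k = 2$ the ``center'' would have degree $2$, $G$ would be a single path, and the reflection of that path would be a nontrivial automorphism.) Next, deleting $c$ leaves the disjoint union of the $k$ pendant paths $P_{n_1}, \ldots, P_{n_k}$. Since $\sigma$ fixes $c$, it restricts to an automorphism of $G - c$ and hence permutes these path components, and it can only send a component to one isomorphic to it, that is, a path with the same number of vertices.

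Here the hypothesis that $n_1, n_2, \ldots, n_k$ are all distinct does the essential work: no two components have the same length, so $\sigma$ must map each pendant path to itself rather than swapping it with another. Finally I would argue that $\sigma$ acts as the identity on each pendant path individually. Within a path $P_{n_i}$, the vertex adjacent to $c$ is distinguished, since it is the unique vertex of that component adjacent to the already-fixed vertex $c$; hence it is fixed by $\sigma$. A short induction along the path then finishes the job: each successive vertex is the unique not-yet-fixed neighbor of the previously fixed one, so it too must be fixed. Combining this with $\sigma(c) = c$ shows $\sigma$ is the identity on all of $V(G)$, so $\Aut{G}$ is trivial and $G$ is asymmetric.

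I expect no serious obstacle, as each step is routine. The only point requiring a little care is the bookkeeping in the final induction: one must correctly identify the endpoint of each path adjacent to $c$ (of degree $2$ in $G$ when $n_i \geq 2$) and make sure it is not confused with the degree-$1$ leaf at the far end. This distinction is immediate once $c$ is fixed, so the verification is straightforward.
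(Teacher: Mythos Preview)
Your proof is correct and follows essentially the same approach as the paper: both arguments fix the center by its degree, use the distinctness of the $n_i$ to rule out permuting the pendant paths (via the components of $G-c$), and then pin down vertices along each path by their position relative to the center. The only cosmetic difference is that the paper phrases step three as ``each vertex has a unique distance to the center'' while you run an equivalent induction outward along each path.
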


\begin{proof}
Let $G =  T_{n_1, n_2, \ldots, n_k}$ where $k \geq 3$ and $n_1, n_2, \ldots, n_k$ are all distinct. Then any two vertices on the same pendant path have a different distance to the center vertex $v$, which is unique as it has degree $k$. In this way, any two vertices on the same pendant path are unique. For vertices on different pendant paths, if we consider $G-v$ we have $k$ non-isomorphic paths. Thus, any two vertices on different pendant paths are in components of different sizes, and are therefore unique.
\end{proof}

We count the number of asymmetric subdivided stars using integer partitions. A partition of $n-1$ into three or more distinct parts ensures a sub-divided star where no two pendant paths have the same length. In \cite{Hardy}, Hardy and Ramanujan partition integers into distinct parts and give a formula for counting them. We adopt the notation $q(n)$ for this formula, as used in \cite{Hua}. However, because a partition on less than three parts yields a path, which is symmetric, we are only interested in distinct partitions of three or more parts. Moreover, for a subdivided star on $n$ vertices we consider the integer partitions of $n-1$. Thus, removing partitions of integers with only two parts from $q(n-1)$ results in our formula gives the number of asymmetric sub-divided stars,
$|ASDS_{n}|=q(n-1)-\left\lfloor \frac{n-1}{2}\right\rfloor$.

The smallest asymmetric tree has seven vertices.  This graph can be extended to the only asymmetric tree on eight vertices, $T_{1,2,4}$. This graph can be extended to two non-isomorphic asymmetric trees on nine vertices: $T_{1,2,5}$ and $T_{1,3,4}$. However, a vertex sub-division could be performed on the central vertex of $T_{1,2,4}$ to obtain the graph found in Figure 3. By Theorem 1.1, it is possible to extend the longest pendant path of the graph in Figure 3 to create an infinite family of asymmetric trees.

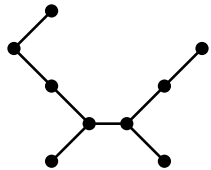
\begin{figure}[h!]
\begin{center}
\begin{tikzpicture}[node distance = 0.1cm, line width = 1pt]
\coordinate (1) at (0,0);
\coordinate (2) at (0.5,0);
\coordinate (3) at (1,0.5);
\coordinate (4) at (1.5,1);
\coordinate (5) at (1,-0.5);
\coordinate (6) at (-0.5,-0.5);
\coordinate (7) at (-0.5,0.5);
\coordinate (8) at (-1,1);
\coordinate (9) at (-0.5,1.5);

\draw(1)--(2);
\draw(2)--(3);
\draw(3)--(4);
\draw(2)--(5);
\draw(1)--(6);
\draw(1)--(7);
\draw(7)--(8);
\draw(8)--(9);

\foreach \point in {1,2,3,4,5,6,7,8,9} \fill (\point) circle (2.5pt);
\end{tikzpicture}
\end{center}
\caption{The smallest asymmetric tree that is not a subdivided star}

\end{figure}

%------------------------------------------------------------------------------

We note by Theorem 1.1, every asymmetric tree on $n$ vertices can be extended to an asymmetric tree on $n+1$ vertices. Hence all asymmetric trees can be extended to infinite families.

\section{Asymmetric Cubic Hamiltonian Graphs}
In this section we provide a construction for an infinite family of asymmetric cubic Hamiltonian graphs.
\vspace{-10pt}
\subsection{Construction}
We begin by detailing a procedure for constructing the graph:
\begin{itemize}
\item[(1)] Construct $C_n$ where $n$ is even and $n \geq 12$ and label the vertices from $v_1$ to $v_n$, clockwise.
\vspace{-5pt}
\item[(2)] Add the edge $v_n v_{\frac{n}{2}-1}$ which creates two unequal sets of vertices.
\vspace{-5pt}
\item[(3)] Add the edge $v_1 v_{n-2}$. 
%which gives an equal number of vertices in each set without interior edges.
\vspace{-5pt}
\item[(4)] Add the edge $v_{n-1} v_{n-3}$.
\vspace{-5pt}
\item[(5)] For each $v_k \in G$, if $2 \leq k  \leq \frac{n}{2}-2$, then add the edge $v_k v_{n-2-k}$. 
\end{itemize}

This yields a cubic Hamiltonian graph on $n$ vertices, and examples of this graph on 12 and 18 vertices are depicted in Figure 4. Note that there are exactly two $C_3$ subgraphs, which remain fixed as $n$ increases. We will now show this graph has no symmetry.

\begin{figure}[h!]
\begin{center}
\begin{tikzpicture}[node distance = 0.1cm, line width = 1pt]
\coordinate (1) at ({2.5*cos(30)},{2.5*sin(30)});
\coordinate (2) at ({2.5*cos(60)},{2.5*sin(60)});
\coordinate (3) at ({2.5*cos(90)},{2.5*sin(90)});
\coordinate (4) at ({2.5*cos(120)},{2.5*sin(120)});
\coordinate (5) at ({2.5*cos(150)},{2.5*sin(150)});
\coordinate (6) at ({2.5*cos(180)},{2.5*sin(180)});
\coordinate (7) at ({2.5*cos(210)},{2.5*sin(210)});
\coordinate (8) at ({2.5*cos(240)},{2.5*sin(240)});
\coordinate (9) at ({2.5*cos(270)},{2.5*sin(270)});
\coordinate (10) at ({2.5*cos(300)},{2.5*sin(300)});
\coordinate (11) at ({2.5*cos(330)},{2.5*sin(330)});
\coordinate (12) at ({2.5*cos(0)},{2.5*sin(0)});
\node (13) [above right = 0.25mm of 1] {2};
\node (14) [above right = 0.25mm of 2] {1};
\node (15) [above = 0.5mm of 3] {12};
\node (16) [above left = 0.25mm of 4] {11};
\node (17) [above left = 0.25mm of 5] {10};
\node (18) [left = 0.5mm of 6] {9};
\node (19) [below left = 0.25mm of 7] {8};
\node (20) [below left = 0.25mm of 8] {7};
\node (21) [below = 0.5mm of 9] {6};
\node (22) [below right = 0.25mm of 10] {5};
\node (23) [below right = 0.25mm of 11] {4};
\node (24) [right = 0.5mm of 12] {3};

\coordinate (25) at ({6.5 + 2.5*cos(20)},{2.5*sin(20)});
\coordinate (26) at ({6.5 + 2.5*cos(40)},{2.5*sin(40)});
\coordinate (27) at ({6.5 + 2.5*cos(60)},{2.5*sin(60)});
\coordinate (28) at ({6.5 + 2.5*cos(80)},{2.5*sin(80)});
\coordinate (29) at ({6.5 + 2.5*cos(100)},{2.5*sin(100)});
\coordinate (30) at ({6.5 + 2.5*cos(120)},{2.5*sin(120)});
\coordinate (31) at ({6.5 + 2.5*cos(140)},{2.5*sin(140)});
\coordinate (32) at ({6.5 + 2.5*cos(160)},{2.5*sin(160)});
\coordinate (33) at ({6.5 + 2.5*cos(180)},{2.5*sin(180)});
\coordinate (34) at ({6.5 + 2.5*cos(200)},{2.5*sin(200)});
\coordinate (35) at ({6.5 + 2.5*cos(220)},{2.5*sin(220)});
\coordinate (36) at ({6.5 + 2.5*cos(240)},{2.5*sin(240)});
\coordinate (37) at ({6.5 + 2.5*cos(260)},{2.5*sin(260)});
\coordinate (38) at ({6.5 + 2.5*cos(280)},{2.5*sin(280)});
\coordinate (39) at ({6.5 + 2.5*cos(300)},{2.5*sin(300)});
\coordinate (40) at ({6.5 + 2.5*cos(320)},{2.5*sin(320)});
\coordinate (41) at ({6.5 + 2.5*cos(340)},{2.5*sin(340)});
\coordinate (42) at ({6.5 + 2.5*cos(0)},{2.5*sin(0)});
\node (43) [above = 0.5mm of 29] {18};
\node (44) [above = 0.5mm of 28] {1};
\node (45) [above right = 0.25mm of 27] {2};
\node (46) [above right = 0.25mm of 26] {3};
\node (47) [right = 0.5mm of 25] {4};
\node (48) [right = 0.5mm of 42] {5};
\node (49) [right = 0.5mm of 41] {6};
\node (50) [below right = 0.25mm of 40] {7};
\node (51) [below right = 0.25mm of 39] {8};
\node (52) [below = 0.5mm of 38] {9};
\node (53) [below = 0.5mm of 37] {10};
\node (54) [below left = 0.25mm of 36] {11};
\node (55) [below left = 0.25mm of 35] {12};
\node (56) [left = 0.5mm of 34] {13};
\node (57) [left = 0.5mm of 33] {14};
\node (58) [left = 0.5mm of 32] {15};
\node (59) [above left = 0.25mm of 31] {16};
\node (60) [above left = 0.25mm of 30] {17};
\node (61) [below = 0.75mm of 21] {(a)};
%\node (62) [below left = 0.75mm of 54] {(b)};

\draw \foreach \x [remember=\x as \lastx (initially 1)] in {2,3,4,5,6,7,8,9,10,11,12,1}{(\lastx) -- (\x)};
\draw \foreach \x [remember=\x as \lastx (initially 25)] in {26,27,28,29,30,31,32,33,34,35,36,37,38,39,40,41,42,25}{(\lastx) -- (\x)};
\draw (6) to [out=40, in=-100] (4);
\draw (9) to [out=50, in=190] (11);
\draw (3)--(10);
\draw (1)--(7);
\draw (12)--(8);
\draw (2)--(5);

\draw (29)--(39);
\draw (27)--(33);
\draw (26)--(34);
\draw (25)--(35);
\draw (42)--(36);
\draw (41)--(37);

\draw (31) to [out=5, in=-140] (28);
\draw (32) to [out=20, in=-100] (30);
\draw (38) to [out=50, in=190] (40);
\draw (37) to node [below=7mm]{(b)} (38);

\foreach \point in {1,2,3,4,5,6,7,8,9,10,11,12,25,26,27,28,29,30,31,32,33,34,35,36,37,38,39,40,41,42} \fill (\point) circle (2.5pt);

\end{tikzpicture} 
\end{center}
\caption{Asymmetric cubic Hamiltonian graphs}
\end{figure}
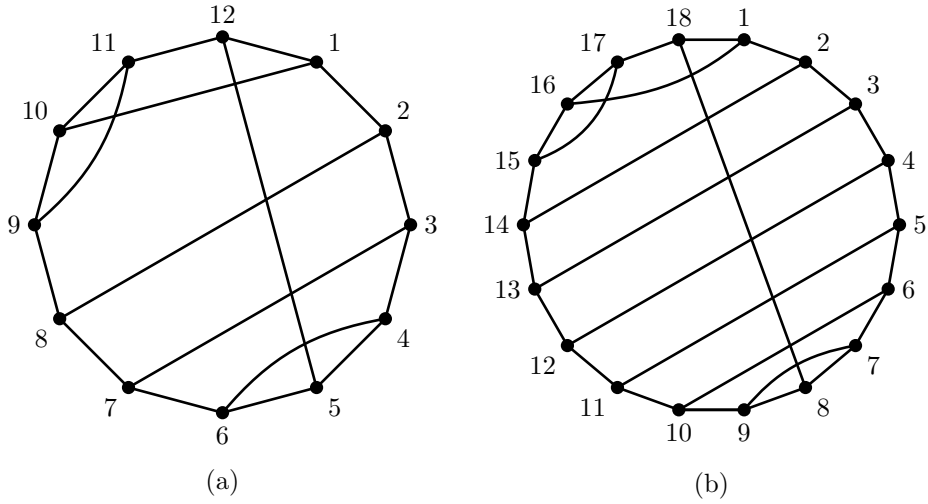

%%%%%%%%%%%%%%%%%%%%%%%%%%%%%%%%%%%%%%%%%%%%%%%%%%%%%%%%%%%
\subsection{Ensuring a Trivial Automorphism Group}

\hspace{10pt} In order to show that this cubic Hamiltonian graph on $n$ vertices has a trivial automorphism group, we will show that every edge is different from all other edges in the graph. We begin by partitioning edges into sets based on various properties, such that edges in one set are different from edges in all other sets. \\ 

\noindent We partition $E(G)$ as follows:
\begin{itemize}
\item $E(\text{I}) = \{v_1 v_2\}$\\ 
Edges incident to two $C_4$ subgraphs and are not contained in a $C_4$.
\vspace{-5pt}
\item $E(\text{II}) = \{v_n v_{n-1},  v_n v_1,  v_1 v_{n-2},  v_{2} v_{n-4}, v_{\frac{n}{2}-3} v_{\frac{n}{2}+1}\}$\\
Edges that are in a $C_4$ and in a $C_5$.
\vspace{-5pt}
\item $E(\text{III})= \{v_{n-2}v_{n-1}, v_{n-1}v_{n-3}, v_{\frac{n}{2}-1} v_{\frac{n}{2}}, v_{\frac{n}{2}-1}v_{\frac{n}{2}-2}\}$\\
Edges that are in a $C_3$ and incident to a vertex that is distance 2 from a vertex in the other $C_3$.
\vspace{-5pt}
\item $E(\text{IV}) = \{v_{n-3} v_{n-2}, v_{\frac{n}{2}-2}v_{\frac{n}{2}}\}$ \\ 
Edges that are in a $C_3$ but not incident to a vertex that is distance 2 from a vertex in the other $C_3$. 
\vspace{-5pt}
\item $E(\text{V})=\{v_nv_{\frac{n}{2}-1}, v_{n-3}v_{n-4}\}$\\
Edges that are incident to a $C_3$ (and are not in a $C_4$ nor a $C_5$).
\vspace{-5pt}
\item $E(\text{VI})=\{v_kv_{n-2-k} \; | \; 2 \leq k \leq \frac{n}{2}-4\}$ \\
Edges that are in two $C_4s$, but no $C_5s$.
\vspace{-5pt}
\item $E(\text{VII})=\{v_kv_{k+1} \; | \; 2 \leq k \leq \frac{n}{2}-3 \; \text{and} \; \frac{n}{2} \leq k \leq n-5 \}$ \\
Edges that are in a $C_4$ and incident to a $C_4$.
\end{itemize}

\hspace{10pt} In this way, edges in each set are different from edges in all other sets. Now we must also show that every edge in each set is different from all other edges within the same set.

\begin{itemize}
\item $E(\text{I})$:
\vspace{-10pt}
\begin{itemize}
\item This set has only one edge.
\end{itemize}
\vspace{-10pt}
\item $E(\text{II})$:
\vspace{-10pt}
\begin{itemize}
\item Edge $v_nv_{n-1}$ has three distinct paths of length one to a $C_3$ subgraph. 

\item Edge $v_nv_1$ is distance one from the $C_3$ subgraph with vertices $v_{\frac{n}{2}}, v_{\frac{n}{2}-1}, v_{\frac{n}{2}-2}$, while the edge $v_1v_{n-2}$ is not.
\end{itemize}
\noindent In this way, the edges $v_nv_{n-1},v_nv_1$ and $v_1v_{n-2}$ are different.

\begin{itemize}
\item The edges $v_2v_{n-4}$ and $v_{\frac{n}{2}-3}v_{\frac{n}{2}+1}$ are different from the edges $v_nv_{n+1}, v_nv_1$ and $v_1v_{n-2}$, as the latter edges are distance one or less from the $C_3$ with vertices $v_{n-1},v_{n-2}, v_{n-3}$, whereas the edges $v_2v_{n-4}$ and $v_{\frac{n}{2}-3}v_{\frac{n}{2}+1}$ are not. 

\item The edge $v_{\frac{n}{2}-3}v_{\frac{n}{2}+1}$ has two distinct paths of length one to a $C_3$, while the edge $v_2v_{n-4}$ only has one. 
\end{itemize}
\vspace{-5pt}
Thus, the edges $v_2v_{n-4}$ and $v_{\frac{n}{2}-3}v_{\frac{n}{2}+1}$ are different. In this way, all the edges in $E(\text{II})$ are different. 

\item $E(\text{III})$:
\vspace{-6pt}
\begin{itemize}
\item Edges $v_{\frac{n}{2}-1}v_{\frac{n}{2}}$ and $v_{\frac{n}{2}-1}v_{\frac{n}{2}-2}$ are in neither a $C_4$ nor a $C_6$, while the edge $v_{n-1}v_{n-3}$ is in a $C_6$ (and not in a $C_4$), and the edge $v_{n-2}v_{n-1}$ is in a $C_4$ (and not in a $C_6$). In this way, the edges $v_{n-1}v_{n-3}$ and $v_{n-2}v_{n-1}$ are different. 

\item In order to differentiate between the edges $v_{\frac{n}{2}-1}v_{\frac{n}{2}}$ and $v_{\frac{n}{2}-1}v_{\frac{n}{2}-2}$, we compare the distance from each edge to the $C_3$ subgraph with vertices $v_{n-1}, v_{n-2}, v_{n-3}$. When comparing distances to the $C_3$ subgraph between any edge $v_k v_t$, where $2 \leq k \leq n-5$ and $t=k+1$, we do not consider paths that use the edge $v_nv_{\frac{n}{2}-1}$, as two edges can reach the $C_3$ subgraph in the same distance using this edge (due to the symmetry below the edge $v_2v_{n-4}$). Therefore, when we compare the distance from each edge to the $C_3$ subgraph, we use paths that only use the exterior edges below the edge $v_2v_{n-4}$ in order to differentiate between edges. 

Using paths that only use the exterior edges below the edge $v_2v_{n-4}$, we find that the edge $v_{\frac{n}{2}-1}v_{\frac{n}{2}}$ is distance four away from the $C_3$ subgraph, while the edge $v_{\frac{n}{2}-1}v_{\frac{n}{2}-2}$ is distance three. Therefore, the edges $v_{\frac{n}{2}-1}v_{\frac{n}{2}}$ and $v_{\frac{n}{2}-1}v_{\frac{n}{2}-2}$ are different. Thus, all the edges in $E(\text{III})$ are different. 
\end{itemize}
\vspace{-10pt}
\item $E(\text{IV})$: 
\vspace{-7pt}
\begin{itemize}
\item Edge $v_{\frac{n}{2}-2}v_{\frac{n}{2}}$ is part of a $C_4$, while the edge $v_{n-3}v_{n-2}$ is not. In this way, the edges in $E(\text{IV})$ are different. 
\end{itemize}
\vspace{-10pt}
\item $E(\text{V})$: 
\vspace{-7pt}
\begin{itemize}
\item Edge $v_nv_{\frac{n}{2}-1}$ has a path of length one to a $C_3$ subgraph, while the edge $v_{n-3}v_{n-4}$ does not. Thus, the edges in $E(\text{V})$ are different. 
\end{itemize}
\vspace{-10pt}
\item $E(\text{VI})$:
\vspace{-5pt}
\begin{itemize}
\item We differentiate between edges within this group by comparing distances from each edge to both $C_3$ subgraphs. Recall that edges below the edge $v_2v_{n-4}$ have symmetry about the edge $v_nv_{\frac{n}{2}-1}$. Thus, when comparing distances from edges to each $C_3$ subgraph, we use paths that do not use the edge $v_nv_{\frac{n}{2}-1}$ and only use the exterior edges below the edge $v_2v_{n-4}$ to ensure distinct path lengths. 
%reword

For each edge in set VI, when we list the shortest path distance to each $C_3$ subgraph, we find that each edge has a unique pair of distances. This is because the edge $v_nv_{\frac{n}{2}-1}$ splits $C_n$ into two unequal subgraphs so that the path along the exterior of one side of the edge $v_nv_{\frac{n}{2}-1}$ to the top $C_3$ subgraph is different to the path along the exterior of the other side of the edge $v_nv_{\frac{n}{2}-1}$. This ensures that all edges in $E(\text{VI})$ are different. 
\end{itemize}
\vspace{-10pt}
\item $E(\text{VII})$:
\vspace{-5pt}
\begin{itemize}
\item We differentiate between edges within this group by comparing distances from each edge to both $C_3$ subgraphs. Recall that edges below the edge $v_2v_{n-4}$ have symmetry about the edge $v_nv_{\frac{n}{2}-1}$. Thus, when comparing distances from edges to each $C_3$ subgraph, we use paths that do not use the edge $v_nv_{\frac{n}{2}-1}$ and only use the exterior edges below the edge $v_2v_{n-4}$  to ensure distinct path lengths. For each edge in Group VII, when we list the shortest path distance to each $C_3$ subgraph, we find that each edge has a distinct pair of distances. This is because the edge $v_nv_{\frac{n}{2}-1}$ splits $C_n$ into two unequal subgraphs so that the path along the exterior of one side of the edge $v_n v_{\frac{n}{2}-1}$ to the top $C_3$ subgraph is different to the path along the exterior of the other side of the edge $v_nv_{\frac{n}{2}-1}$. This ensures that all edges in $E(\text{VII})$ are different.

%each edge in Group VII has a distinct combination of shortest path lengths to each $C_3$ subgraph (when using paths that satisfy the condition that they do not use the edge $(1, \frac{n}{2})$ and only use the exterior edges below the edge $(3, n-3)$). In this way, the edges in Group VII are distinct. 

%paths from edges in Group VII to each $C_3$ subgraph that satisfy the conditions, from edges in Group VII to each $C_3$ subgraph 

%the combination of the shortest path length from each edge to each $C_3$ will have a distinct shortest path lengths combination do not use the edge $(1, \frac{n}{2})$ and only use the exterior edges below the edge $(3, n-3)$ from the edges in Group VII to the $C_3$ subgraph will always have a distinct shortest path length. In this way, the edges in Group VII are distinct. 

\end{itemize}

\end{itemize}
\hspace{10pt} In this way, the edges within each group are different from each other. Thus, every edge is different compared to all other edges in the graph. Consequently, the graph has a trivial automorphism group, and therefore, has no symmetry. In this way, we have shown that this infinite family of cubic Hamiltonian graphs on even $n \geq 12$ vertices has no symmetry. 

\section{Asymmetric Quartic Hamiltonian Graphs}
\hspace{10pt} We now present constructions of asymmetric quartic Hamiltoninan graphs on even $n \geq 12$ vertices. These constructions depend on congruency of $n$ modulo 4. 

\subsection*{Constructing the graph on $n \equiv 0 \; \text{mod} \; 4$ vertices}
\begin{itemize}
\item[(1)] Construct the cubic Hamiltonian graph described in Figure 1 (a) starting at $C_n$ and label the vertices from 1 to $n$ consecutively, traversing clockwise.
\vspace{-5pt}
\item[(2)] Add the following edges:
\begin{figure}[h!]
\centering
\begin{tabular}{lll}
$v_{\frac{n}{4}}v_{\frac{3n}{4}}$ & and & $v_{\frac{n}{4}+i}v_{(\frac{n}{4}-i) \; \text{mod} \; n}$ 
where $1 \leq i \leq \frac{n}{2}-1$
\end{tabular} 
\end{figure}

\end{itemize}
\subsection*{Constructing the graph on $n \equiv 2 \; \text{mod} \; 4$ vertices}
\begin{itemize}
\item[(1)] Construct the cubic Hamiltonian graph described in Figure 4 (b) starting at $C_n$ and label the vertices from 1 to $n$ consecutively, traversing clockwise.
\vspace{-5pt}
\item[(2)] Add the following edges:
\begin{figure}[h!]
\centering
\begin{tabular}{lll}
$v_{1} v_{\frac{n}{2}+1}$ & and & $v_{1+i}v_{(1-i) \; \text{mod} \; n}$ 
where $1 \leq i \leq \frac{n}{2}-1$
\end{tabular} 
\end{figure}
\end{itemize}
Examples of these graphs on $n=12$ and $n=18$ vertices are depicted in Figure 5. 
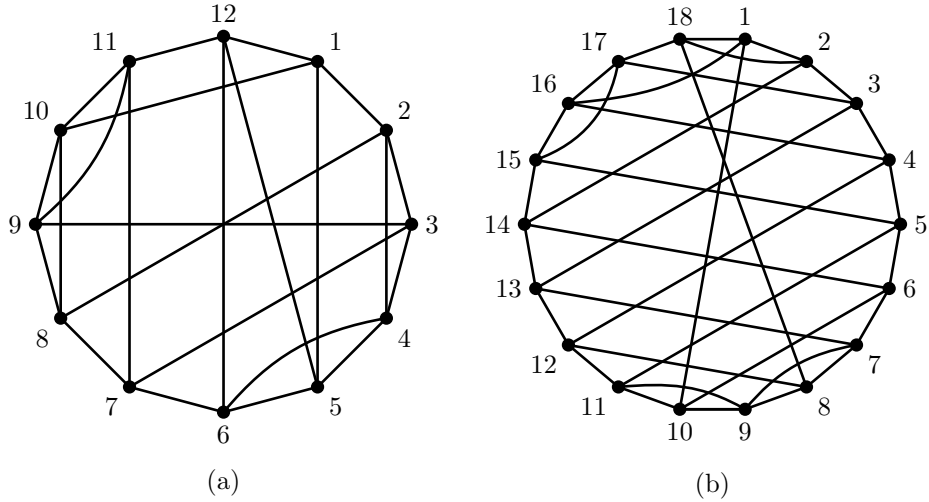
\begin{figure}[h!]
\begin{center}
\begin{tikzpicture}[node distance = 0.1cm, line width = 1pt]
\coordinate (1) at ({2.5*cos(30)},{2.5*sin(30)});
\coordinate (2) at ({2.5*cos(60)},{2.5*sin(60)});
\coordinate (3) at ({2.5*cos(90)},{2.5*sin(90)});
\coordinate (4) at ({2.5*cos(120)},{2.5*sin(120)});
\coordinate (5) at ({2.5*cos(150)},{2.5*sin(150)});
\coordinate (6) at ({2.5*cos(180)},{2.5*sin(180)});
\coordinate (7) at ({2.5*cos(210)},{2.5*sin(210)});
\coordinate (8) at ({2.5*cos(240)},{2.5*sin(240)});
\coordinate (9) at ({2.5*cos(270)},{2.5*sin(270)});
\coordinate (10) at ({2.5*cos(300)},{2.5*sin(300)});
\coordinate (11) at ({2.5*cos(330)},{2.5*sin(330)});
\coordinate (12) at ({2.5*cos(0)},{2.5*sin(0)});
\node (13) [above right = 0.25mm of 1] {2};
\node (14) [above right = 0.25mm of 2] {1};
\node (15) [above = 0.5mm of 3] {12};
\node (16) [above left = 0.25mm of 4] {11};
\node (17) [above left = 0.25mm of 5] {10};
\node (18) [left = 0.5mm of 6] {9};
\node (19) [below left = 0.25mm of 7] {8};
\node (20) [below left = 0.25mm of 8] {7};
\node (21) [below = 0.5mm of 9] {6};
\node (22) [below right = 0.25mm of 10] {5};
\node (23) [below right = 0.25mm of 11] {4};
\node (24) [right = 0.5mm of 12] {3};

\coordinate (25) at ({6.5 + 2.5*cos(20)},{2.5*sin(20)});
\coordinate (26) at ({6.5 + 2.5*cos(40)},{2.5*sin(40)});
\coordinate (27) at ({6.5 + 2.5*cos(60)},{2.5*sin(60)});
\coordinate (28) at ({6.5 + 2.5*cos(80)},{2.5*sin(80)});
\coordinate (29) at ({6.5 + 2.5*cos(100)},{2.5*sin(100)});
\coordinate (30) at ({6.5 + 2.5*cos(120)},{2.5*sin(120)});
\coordinate (31) at ({6.5 + 2.5*cos(140)},{2.5*sin(140)});
\coordinate (32) at ({6.5 + 2.5*cos(160)},{2.5*sin(160)});
\coordinate (33) at ({6.5 + 2.5*cos(180)},{2.5*sin(180)});
\coordinate (34) at ({6.5 + 2.5*cos(200)},{2.5*sin(200)});
\coordinate (35) at ({6.5 + 2.5*cos(220)},{2.5*sin(220)});
\coordinate (36) at ({6.5 + 2.5*cos(240)},{2.5*sin(240)});
\coordinate (37) at ({6.5 + 2.5*cos(260)},{2.5*sin(260)});
\coordinate (38) at ({6.5 + 2.5*cos(280)},{2.5*sin(280)});
\coordinate (39) at ({6.5 + 2.5*cos(300)},{2.5*sin(300)});
\coordinate (40) at ({6.5 + 2.5*cos(320)},{2.5*sin(320)});
\coordinate (41) at ({6.5 + 2.5*cos(340)},{2.5*sin(340)});
\coordinate (42) at ({6.5 + 2.5*cos(0)},{2.5*sin(0)});
\node (43) [above = 0.5mm of 29] {18};
\node (44) [above = 0.5mm of 28] {1};
\node (45) [above right = 0.25mm of 27] {2};
\node (46) [above right = 0.25mm of 26] {3};
\node (47) [right = 0.5mm of 25] {4};
\node (48) [right = 0.5mm of 42] {5};
\node (49) [right = 0.5mm of 41] {6};
\node (50) [below right = 0.25mm of 40] {7};
\node (51) [below right = 0.25mm of 39] {8};
\node (52) [below = 0.5mm of 38] {9};
\node (53) [below = 0.5mm of 37] {10};
\node (54) [below left = 0.25mm of 36] {11};
\node (55) [below left = 0.25mm of 35] {12};
\node (56) [left = 0.5mm of 34] {13};
\node (57) [left = 0.5mm of 33] {14};
\node (58) [left = 0.5mm of 32] {15};
\node (59) [above left = 0.25mm of 31] {16};
\node (60) [above left = 0.25mm of 30] {17};
\node (61) [below = 0.75mm of 21] {(a)};

\draw \foreach \x [remember=\x as \lastx (initially 1)] in {2,3,4,5,6,7,8,9,10,11,12,1}{(\lastx) -- (\x)};
\draw \foreach \x [remember=\x as \lastx (initially 25)] in {26,27,28,29,30,31,32,33,34,35,36,37,38,39,40,41,42,25}{(\lastx) -- (\x)};
\draw (6) to [out=40, in=-100] (4);
\draw (9) to [out=50, in=190] (11);
\draw (3)--(10);
\draw (1)--(7);
\draw (12)--(8);
\draw (2)--(5);
\draw (12)--(6);
\draw (1)--(11);
\draw (2)--(10);
\draw (3)--(9);
\draw (4)--(8);
\draw (5)--(7);

\draw (29)--(39);
\draw (27)--(33);
\draw (26)--(34);
\draw (25)--(35);
\draw (42)--(36);
\draw (41)--(37);
\draw (28)--(37);
\draw (35)--(39);
\draw (34)--(40);
\draw (33)--(41);
\draw (32)--(42);
\draw (31)--(25);
\draw (30)--(26);

\draw (31) to [out=5, in=-140] (28);
\draw (32) to [out=20, in=-100] (30);
\draw (38) to [out=50, in=190] (40);
\draw (29) to [out=-20, in=185] (27);
\draw (38) to [out=150, in=5] (36);
\draw (37) to node [below=7mm]{(b)} (38);

\foreach \point in {1,2,3,4,5,6,7,8,9,10,11,12,25,26,27,28,29,30,31,32,33,34,35,36,37,38,39,40,41,42} \fill (\point) circle (2.5pt);

\end{tikzpicture} 
\end{center}
\caption{Asymmetric quartic Hamiltonian graphs}
\end{figure} \\

\noindent We first show asymmetry for the cases of small $n$; that is, $n=12,14,16, \text{and} \; 18$.  

\subsection{Quartic Hamiltonian Graph on 12 Vertices}
We divide $V(G)$ into the following subsets:

\begin{itemize}
\item $V(\text{I})=\{v_2, v_3, v_4\}$\\
\vspace{-5pt} These vertices form a $C_3$ that is one away from two $C_3$s and shares a vertex with another $C_3$.

\item $V(\text{II})=\{v_1, v_{12}, v_5\}$\\
\vspace{-5pt} These vertices form a $C_3$ that is one away from two $C_3$s and shares an edge with another $C_3$.

\item $V(\text{III})=\{v_{12}, v_5, v_6\}$\\
\vspace{-5pt} These vertices form a $C_3$ that shares a common edge with two $C_3$s.

\item $V(\text{IV})=\{v_4, v_5, v_6\}$\\
\vspace{-5pt} These vertices form a $C_3$ that shares vertices with three distinct $C_3$s.

\item $V(\text{V})=\{v_8, v_9, v_{10}\}$\\
\vspace{-5pt} These vertices form a $C_3$ that is one away from two $C_3$s.

\item $V(\text{VI})=\{v_7, v_{11} \}$\\
\vspace{-5pt} These vertices are not in any $C_3$s.
\end{itemize}
\vspace{3pt}
\hspace{10pt} In this way, vertices in any given subset are different from vertices that are not in that same subset. In order to show that every vertex in $V(\text{G})$ is unique, we must also show that each vertex in every subset is different from all other vertices within the same subset.

\begin{itemize}
\item $V(\text{I})$:
\vspace{-7pt}
\begin{itemize}
\item $v_2$ is distance one away from $V(\text{II})$.
\vspace{-5pt}
\item $v_3$ is not distance one away from $V(\text{II})$
\vspace{-5pt}
\item $v_4$ is distance one away from $V(\text{II})$ and in $V(\text{IV})$.
\vspace{-5pt}
\end{itemize}

\item $V(\text{II})$:
\vspace{-7pt}
\begin{itemize}
\item $v_{12}$ is in both $V(\text{III})$ and $V(\text{II})$.
\vspace{-5pt}
\item $v_5$ is in $V(\text{III})$, $V(\text{IV})$ and $V(\text{II})$.
\vspace{-5pt}
\item $v_1$ is in only $V(\text{II})$.
\vspace{-5pt}
\end{itemize}

\item $V(\text{III})$:
\vspace{-7pt}
\begin{itemize}
\item $v_{12}$ is in $V(\text{II})$ and $V(\text{III})$.
\vspace{-5pt}
\item $v_5$ is in $V(\text{II})$, $V(\text{IV})$ and $V(\text{III})$.
\vspace{-5pt}
\item $v_6$ is in $V(\text{IV})$ and  $V(\text{III})$.
\vspace{-5pt}
\end{itemize}

\item $V(\text{IV})$:
\vspace{-7pt}
\begin{itemize}
\item $v_4$ is distance one away from $V(\text{II})$ and in $V(\text{IV})$.
\vspace{-5pt}
\item $v_5$ is in $V(\text{II})$, $V(\text{IV})$ and $V(\text{III})$.
\vspace{-5pt}
\item $v_6$ is in $V(\text{IV})$ and $V(\text{III})$.
\vspace{-5pt}
\end{itemize}

\item $V(\text{V})$:
\vspace{-7pt}
\begin{itemize}
\item $v_{8}$ is distance two away from $v_3$, which we now know is unique.
\vspace{-5pt}
\item $v_{9}$ is distance one away from $v_3$, which is unique.
\vspace{-5pt}
\item $v_{10}$ is distance one away from $V(\text{II})$.
\vspace{-5pt}
\end{itemize}

\item $V(\text{VI})$:
\vspace{-7pt}
\begin{itemize}
\item $v_{7}$ is distance one away from $V(\text{IV})$
\vspace{-5pt}
\item $v_{11}$ is distance two away from $V(\text{IV})$
\vspace{-5pt}
\end{itemize}

\end{itemize}

\hspace{10pt} Since we have shown that each subset of $V(\text{G})$ is different from all other subsets, and since we have described how vertices within each subset are unique, all vertices in $V(\text{G})$ are unique and the graph is asymmetric. 
%-----------------------------------------------------------------------------------
\subsection{Quartic Hamiltonian Graph on 16 Vertices}
We divide $V(G)$ into the following subsets:

\begin{itemize}
\item $V(\text{I})$ $=\{v_3, v_4, v_5\}$\\
\vspace{-5pt} These vertices form a $C_3$ that is one away from two $C_3$s.

\item $V(\text{II})$ $=\{v_{11}, v_{12}, v_{13}\}$\\
\vspace{-5pt} These vertices form a $C_3$ that is distance one away from two $C_3$s and is in another $C_3$.

\item $V(\text{III})$ $=\{v_{7}, v_8, v_{16}\}$\\
\vspace{-5pt} These vertices form a $C_3$ that shares a common edge with two $C_3$s.

\item $V(\text{IV})$ $=\{v_1, v_7, v_{16}\}$\\
\vspace{-5pt} These vertices form a $C_3$ that is two away from two distinct $C_3$s. 

\item $V(\text{V})$ $=\{v_{13}, v_{14}, v_{15}\}$\\
\vspace{-5pt} These vertices form a $C_3$ that shares a vertex, but does not share an edge with another $C_3$.

\item $V(\text{VI})$ $=\{v_6, v_{7} , v_8 \}$\\
\vspace{-1pt} These vertices form a $C_3$ that shares a vertex with two distinct $C_3$s, shares an edge with one $C_3$, and is one away from a $C_3$ whose vertices are all in only one $C_3$.
\vspace{-5pt}
\item $V(\text{VII})$ $=\{v_2, v_9, v_{10} \}$\\
\vspace{-5pt} These vertices are not in any $C_3$s.

\end{itemize}

\hspace{10pt} In this way, vertices in any given subset are different from vertices that are not in that same subset. In order to show that every vertex in $V(\text{G})$ is unique, we must also show that each vertex in every subset is different from all other vertices within the same subset.
\begin{itemize}
\item $V(\text{III})$:
\vspace{-7pt}
\begin{itemize}
\item $v_7$ is in $V(\text{VI})$, $V(\text{IV})$, and $V(\text{III})$.
\vspace{-5pt}
\item $v_8$ is in both $V(\text{VI})$ and $V(\text{III})$.
\vspace{-5pt}
\item $v_{16}$ is not in $V(\text{VI})$.
\vspace{-5pt}
\end{itemize}

\item $V(\text{IV})$:
\vspace{-7pt}
\begin{itemize}
\item $v_{16}$ is in $V(\text{III})$ and $V(\text{IV})$.
\vspace{-5pt}
\item $v_7$ is in $V(\text{IV})$, $V(\text{VI})$, and $V(\text{III})$.
\vspace{-5pt}
\item $v_1$  is not in $V(\text{III})$ nor $V(\text{IV})$.
\vspace{-5pt}
\end{itemize}

\item $V(\text{V})$:
\vspace{-7pt}
\begin{itemize}
\item $v_13$ is in both $V(\text{II})$ and $V(\text{V})$.
\vspace{-5pt}
\item $v_14$ is distance two away from $V(\text{III})$.
\vspace{-5pt}
\item $v_15$ is distance one away from $V(\text{III})$.
\vspace{-5pt}
\end{itemize}

\item $V(\text{VI})$:
\vspace{-7pt}
\begin{itemize}
\item $v_8$ is in both $V(\text{III})$ and $V(\text{VI})$.
\vspace{-5pt}
\item $v_7$ is in $V(\text{IV})$, $V(\text{III})$, and $V(\text{VI})$.
\vspace{-5pt}
\item $v_6$ is not in $V(\text{IV})$ nor $V(\text{III})$.
\vspace{-5pt}
\end{itemize}

\item $V(\text{VII})$:
\vspace{-7pt}
\begin{itemize}
\item $v_2$ is distance one away from $V(\text{VI})$ and one away from $V(\text{IV})$.
\vspace{-5pt}
\item $v_9$ is distance one away from $V(\text{VI})$ and one away from  $V(\text{V})$.
\vspace{-5pt}
\item $v_{10}$ is distance two away from $V(\text{VI})$.
\vspace{-5pt}
\end{itemize}

\item $V(\text{I})$:
\vspace{-7pt}
\begin{itemize}
\item $v_3$ is distance one away from $v_2$, which we now know is unique.
\vspace{-5pt}
\item $v_4$ is distance two away from $v_2$, which we know is unique.
\vspace{-5pt}
\item $v_5$ is distance one away from $V(\text{VI})$.
\vspace{-5pt}
\end{itemize}

\item $V(\text{II})$:
\vspace{-7pt}
\begin{itemize}
\item $v_{11}$ is distance two away from $v_2$, which we now know is unique.
\vspace{-5pt}
\item $v_{12}$ is distance one away from $v_2$, which we know is unique.
\vspace{-5pt}
\item $v_{13}$ is in both $V(\text{V})$ and $V(\text{II})$.
\vspace{-5pt}
\end{itemize}

\end{itemize}

\hspace{10pt} Since we have shown that each subset of $V(\text{G})$ is different from all other subsets, and since we have described how vertices within each subset are unique, all vertices in $V(\text{G})$ are unique and the graph is asymmetric.

%%%%%%%%%%%%

\subsection{Quartic Hamiltonian Graph on 14 Vertices}
We distinguish between vertices as follows:
\vspace{-5pt}
\begin{itemize}
\item The vertices $v_1, \; v_2, \; v_{5}, \; v_{6}, \; v_{7}, \; v_{8}, \; v_{9}, \; v_{11}, \; v_{12}, \; v_{13}, \; v_{14}$ are the only vertices contained in a $C_3$ subgraph:

\begin{itemize}
\item \vspace{-5pt} $v_{7}$ is unique because it is the only vertex contained in two distinct $C_3$s.

\item \vspace{-5pt}  $v_2$ and $v_{11}$ are the only vertices contained in a $C_3$ that is not distance one away from another $C_3$.
\begin{itemize}
\item \vspace{-3pt} $v_2$ is distance three away from the vertex $v_{7}$, while the vertex $v_{11}$ is not. 

\end{itemize}
\end{itemize}
\end{itemize}

\begin{itemize}
\item \vspace{-5pt} The vertices $v_1, \; v_{6}, \; v_{8}, \; v_{12}, \; v_{13}, \; v_{5}, \; v_{9} \; \text{and} \; v_{14}$ are all contained in a $C_3$ and are one away from a $C_3$:

\begin{itemize}
\item \vspace{-5pt} $v_1$ and $v_{14}$ are adjacent to $v_2$.

\begin{itemize}
\item \vspace{-3pt} $v_1$ is only contained in one $C_4$ whereas $v_{14}$ is contained in two $C_4$s. 
\end{itemize}

\item \vspace{-3pt} $v_{6}$, $v_{8}$, $v_{5}$, and \;$ v_{9}$ are adjacent to the vertex $v_{7}$.

\begin{itemize}
\item \vspace{-3pt} $v_{6}$ is adjacent to $v_{14}$, while $v_{8}$, $v_{5}$, and \;$ v_{9}$ are not.

\item \vspace{-3pt} $v_{8}$ is adjacent to $v_1$, while $v_{6}$, $v_{5}$, and \;$ v_{9}$ are not.

\item \vspace{-3pt} $v_{5}$ is distance two away from $v_{14}$, while $v_{9}$ is not.
\end{itemize}

\item \vspace{-3pt} $v_{13}$ and $v_{12}$ are not adjacent to $v_{7}$ nor $v_2$.

\begin{itemize}
\item \vspace{-3pt} $v_{13}$ is adjacent to $v_{14}$, while the vertex $v_{12}$ is not.
\end{itemize}
\end{itemize}

\item \vspace{-5pt} The vertices $v_3$, $v_4$ and $v_{10}$ are not in a $C_3$:
\begin{itemize}
\item \vspace{-5pt} $v_{3}$ is adjacent to $v_{13}$.
\item \vspace{-5pt} $v_{4}$ is adjacent to $v_{8}$.
\item \vspace{-5pt} $v_{10}$ is adjacent to $v_{11}$.
\end{itemize}

\end{itemize}

\noindent \vspace{-5pt} In this way all vertices are unique and the graph is asymmetric.

%%%%%%%%

\subsection{Quartic Hamiltonian Graph on 18 Vertices}

We construct a quartic Hamiltonian graph (as described in section 1.1). 
\noindent We distinguish between vertices as follows:

\begin{itemize}

\vspace{-5pt}
\item Vertices $v_1, \; v_2, \; v_{7}, \; v_{8}, \; v_{9}, \; v_{10}, \; v_{11}, \; v_{15}, \; v_{16}, \; v_{17}$, and $\; v_{18}$ are the only vertices contained in any $C_3$ subgraph:

\begin{itemize}
\item \vspace{-5pt} The vertex $v_{9}$ is unique because it is the only vertex contained in two distinct $C_3$s.

\item \vspace{-5pt} The vertices $v_2$ and $v_{15}$ are the only vertices contained in a $C_3$ that is not one away from another $C_3$.

\begin{itemize}
\item \vspace{-5pt} $v_2$ is distance three away from the vertex $v_{9}$, while the vertex $v_{15}$ is not. 
\end{itemize}
\end{itemize}

\item \vspace{-5pt} The vertices $v_1, \; v_{8}, \; v_{10}, \; v_{16}, \; v_{17}, \; v_{7}, \; v_{11} \; \text{and} \; v_{18}$ are all contained in a $C_3$ and are one away from a $C_3$:

\begin{itemize}
\item \vspace{-5pt}  $v_1$ and $v_{18}$ are adjacent to $v_2$.

\begin{itemize}
\item \vspace{-3pt} $v_1$ is only contained in one $C_4$ whereas $v_{18}$ is contained in two $C_4$s. 
\end{itemize}

\item \vspace{-5pt} $v_{8}$, $v_{10}$, $v_{7}$, and \;$ v_{11}$ are adjacent to the vertex $v_{9}$.

\begin{itemize}
\item \vspace{-3pt} $v_{8}$ is adjacent to $v_{18}$, while $v_{10}$, $v_{7}$, and \;$ v_{11}$ are not.

\item \vspace{-3pt} $v_{10}$ is adjacent to $v_1$, while $v_{8}$, $v_{7}$, and \;$ v_{11}$ are not.

\item \vspace{-3pt} $v_{7}$ is distance two away from $v_{18}$, while $v_{11}$ is not.
\end{itemize}

\item \vspace{-5pt} $v_{17}$ and $v_{16}$ are not adjacent to $v_{9}$ nor $v_2$.

\begin{itemize}
\item \vspace{-5pt} $v_{17}$ is adjacent to $v_{18}$, while the vertex $v_{16}$ is not.
\end{itemize}
\end{itemize}

%#########################3
\item \vspace{-5pt} The vertices $v_3$, $v_4$, $v_5$, $v_6$, $v_{12}$, $v_{13}$ and $v_{14}$ are not in any $C_3$s and we distinguish between them as follows:

\begin{itemize}
\item \vspace{-5pt} $v_3$: is adjacent to $v_{17}$, $v_{2}$, $v_{4}$ and $v_{13}$. We know $v_{17}$ and $v_{2}$ are unique. $v_{4}$ is adjacent to $v_{16}$ and $v_{13}$ is not. Thus, $v_{16}$ and $v_{13}$ are unique, and we deduce that $v_3$ is unique.

\item \vspace{-5pt} $v_4$: is adjacent to $v_{16}$, $v_{3}$, $v_{5}$ and $v_{12}$. We know $v_{16}$ and $v_{3}$ are unique. $v_{5}$ is adjacent to $v_{15}$ and $v_{12}$ is not. Thus, $v_{15}$ and $v_{12}$ are unique, and we deduce that $v_4$ is unique.

\item \vspace{-5pt} $v_5$: is adjacent to $v_{15}$, $v_{4}$, $v_{6}$ and $v_{11}$. We know $v_{15}$, $v_{4}$, and $v_{11}$ are unique. Thus, we deduce that $v_{6}$ is unique. Therefore, $v_5$ is unique.

\item \vspace{-5pt} $v_6$: is adjacent to $v_{14}$, $v_{5}$, $v_{10}$ and $v_{7}$, which are all unique. Thus, $v_6$ is unique.

\end{itemize}
In this way the vertices $v_3$, $v_4$, $v_5$, $v_6$, $v_{12}$, $v_{13}$ and $v_{14}$ are all unique. Therefore, all vertices are unique and the graph is asymmetric.
\end{itemize}

%%%%%%%%%%%%%%%%%%%%%%%%%%%%%%%%%%%%%%%%%%%%%%%%%%%%%%%%%%%%%%%%%%%
\subsection{Quartic Hamiltonian for Large $n$}

We now show asymmetry for all $n \geq 20$. 

\subsubsection{When $n \equiv 0  \; \text{mod} \; 4$}
\hspace{10pt} In order to show that a quartic Hamiltonian graph on $n \geq 20 $, $n \equiv 0  \; \text{mod} \; 4$ vertices has a trivial automorphism group (no symmetry), we must show that every vertex in the graph is unique. We do this by comparing every vertex in the graph to every other vertex. We begin by grouping vertices based on differentiating properties, so that vertices in each group are different from vertices in all other groups. \\

\textbf{Vertices that are Always Contained in a 3-Cycle}\\

We find that the graph will always have six distinct $C_3$s, no matter how large $n$ is. These six $C_3$s will always have the same distinct properties, making each $C_3$ different from every other $C_3$. The vertex sets for these $C_3$s and their distinguishing characteristics are listed below. It is important to note that the edge  $v_{\frac{n}{4}} v_{\frac{3n}{4}}$ is unique because it is the only edge that connects two $C_3$s whose vertices are all in only one $C_3$, and all have a distance greater than one to another $C_3$.

\begin{itemize}
\item \vspace{-3pt} $V(\text{I}) = \{v_{\frac{n}{2}}, v_{\frac{n}{2}-1}, v_{\frac{n}{2}-2}\}$ \\
\vspace{-3pt} These vertices form a $C_3$ that shares a common vertex with another triangle and shares a common edge with another triangle. It is also at least distance two from any other triangle.
\vspace{-3pt}
\item \vspace{-3pt} $V(\text{II}) = \{v_n, v_1,v_{\frac{n}{2}-1}\}$\\
\vspace{-5pt} These vertices form a $C_3$ that is distance one away from a $C_3$ whose vertices are all in only one $C_3$.
\vspace{-3pt}
\item $V(\text{III}) = \{v_n, v_{\frac{n}{2}},v_{\frac{n}{2}-1}\}$ \\
\vspace{-5pt} These vertices form a $C_3$ that shares a common edge with two other triangles.
\vspace{-3pt}
\item $V(\text{IV}) = \{v_{n-1}, v_{n-2}, v_{n-3}\}$ \\
\vspace{-5pt} These vertices form a $C_3$ that are distance one from vertices in two triangles.
\vspace{-3pt}
\item $V(\text{V}) = \{v_{\frac{n}{4}}, v_{\frac{n}{4}-1}, v_{\frac{n}{4}+1}\}$ \\
\vspace{-3pt} These vertices form a $C_3$ that, compared to $V(\text{VI})$, $V(\text{V})$, has a greater number of distinct shortest paths to vertices in $V(\text{I})$, using the special edge $v_{\frac{n}{4}} v_{\frac{3n}{4}}$ first.
\vspace{-5pt}
\item $V(\text{VI}) = \{v_{\frac{3n}{4}}, v_{\frac{3n}{4}-1}, v_{\frac{3n}{4} +1}\}$ \\
\vspace{-3pt} These vertices form a $C_3$ that, compared to $V(\text{V})$,$V(\text{VI})$, has a smaller number of distinct shortest paths to vertices in $V(\text{I})$, using the special edge $v_{\frac{n}{4}} v_{\frac{3n}{4}}$ first.
\end{itemize}
\hspace{10pt} In this way, any vertex in any $C_3$ is different from all other vertices in any other $C_3$. Now we must show that each vertex in each $C_3$ is different from the vertices within the same $C_3$.

\begin{itemize}
\item $V(\text{I})$:
\vspace{-7pt}
\begin{itemize}
\item $v_{\frac{n}{2}}$ is contained in two $C_3s$.
\vspace{-5pt}
\item $v_{\frac{n}{2}-1}$ is contained in three $C_3s$.
\vspace{-5pt}
\item $v_{\frac{n}{2}-2}$ is contained in one $C_3$. 
\end{itemize}
\vspace{-5pt}
\item $V(\text{II})$: 
\vspace{-7pt}
\begin{itemize}
\item $v_n$ is distance two away from $v_2$
\vspace{-5pt}
\item $v_1$ is distance one away from $v_2$
\vspace{-5pt}
\item $v_{\frac{n}{2}-1}$ is contained in 3 $C_3s$
\end{itemize}

\item $V(\text{III})$: 
\vspace{-7pt}
\begin{itemize}
\item $v_n$ is distance two away from $v_2$
\vspace{-5pt}
\item $v_{\frac{n}{2}}$ is distance three away from $v_2$ 
\vspace{-5pt}
\item $v_{\frac{n}{2}-1}$ is contained in 3 $C_3s$.
\end{itemize}

\item $V(\text{IV})$: Each vertex in this vertex set is differentiated by a unique ordered pair $(x,y)$, where $x$ is the distance to $v_{\frac{n}{2}-1}$ and $y$ is the distance to $v_2$
\vspace{-7pt}
\begin{itemize}
\item $v_{n-1}$: $(2,3)$
\vspace{-5pt}
\item $v_{n-2}$: $(2,2)$
\vspace{-5pt}
\item $v_{n-3}$: $(3,2)$
\end{itemize}
\vspace{-5pt}

\item $V(\text{V})$:
\vspace{-7pt}
\begin{itemize}
\item $v_{\frac{n}{4}}$ is adjacent to the special edge $v_{\frac{n}{4}}v_{\frac{3n}{4}}$ 
\vspace{-3pt}
\item $v_{\frac{n}{4}-1}$ is always has a greater distance to $v_2$ than $v_{\frac{n}{4}+1}$ 
\vspace{-3pt}
\item $v_{\frac{n}{4}+1}$ is always closer to $v_2$ than $v_{\frac{n}{4}-1}$. 
\end{itemize}
\vspace{-5pt}
\item $V(\text{VI})$:  
\vspace{-7pt}
\begin{itemize}
\item $v_{\frac{3n}{4}}$ is adjacent to the special edge $v_{\frac{n}{4}}v_{\frac{3n}{4}}$
\vspace{-3pt}
\item $v_{\frac{3n}{4}-1}$ is always has a greater distance to $v_2$ than $v_{\frac{3n}{4}+1}$ 
\vspace{-3pt}
\item $v_{\frac{3n}{4} +1}$ is always closer to $v_2$ than $v_{\frac{3n}{4}-1}$
\end{itemize}
\end{itemize}

%%%%%%%%%%%%%Part 2:

\textbf{Vertices That Are Not Contained in a 3-cycle}
\newline We can divide the vertices that are not contained in a 3-cycle into two sets. 

\begin{itemize}
\item $V(\text{VII})$ = $\{ v_2, v_3, v_4, v_{\frac{n}{2}-4}, v_{\frac{n}{2}-3}, v_{\frac{n}{2}+1}, v_{\frac{n}{2}+2}, v_{\frac{n}{2}+3}\}$
\vspace{-3pt}
\begin{itemize}
\item \vspace{-3pt} $v_2$ is distance one away from $V(\text{I})$ and one away from $V(\text{II})$. 
\item \vspace{-3pt} $v_3$ is distance two away from $V(\text{II})$, distance three away from $V(\text{IV})$ and distance one away from $v_2$.
\item \vspace{-3pt} $v_4$ is distance four away from $V(\text{III})$ and distance three away from $V(\text{IV})$.
\item \vspace{-3pt} $v_{\frac{n}{2}-4}$ is distance two away from $V(\text{I})$, and distance two away from $V(\text{IV})$. 
\item \vspace{-3pt} $v_{\frac{n}{2}-3}$ is distance one away from $V(\text{I})$.
\item \vspace{-3pt} $v_{\frac{n}{2}+1}$ is distance one away from $V(\text{I})$ and distance one away from $V(\text{IV})$. 
\item \vspace{-3pt} $v_{\frac{n}{2}+2}$ is distance one away from $V(\text{IV})$, and distance two away from $V(\text{II})$.
\item \vspace{-3pt} $v_{\frac{n}{2}+3}$ is distance one away from $V(\text{IV})$, and distance three away from $V(\text{II})$.

\noindent In this way the vertices in Set VII are all unique. 
\end{itemize}

\item $V(\text{VIII})$: Vertices that are added as the graph gets larger.
We can divide the remaining vertices, or the vertices that are added as the graph becomes larger, into the following vertex sets: 

\begin{itemize}

\item $ A= \{ \; v_x \in G\; : \; 4<x<\frac{n}{4}\; \}$

\item $ B= \{ \; v_x \in G\; : \; \frac{n}{4}<x< \frac{n}{2}-4 \; \}$

\item $ C = \{ \; v_x \in G\; : \; \frac{n}{2}+3<x<\frac{3n}{4} \; \}$

\item $ D = \{ \; v_x \in G\; : \; \frac{3n}{4}<x<n-3 \; \}$
\end{itemize}

In order to show that vertices in $V(A)$, $V(B)$, $V(C)$, and $V(D)$ are unique, we must compare vertices in each vertex set to vertices in every other vertex set. We must also compare every vertex in each set to all other vertices within that same set. 

\begin{itemize}
\item[$\circ$] Comparing Sets A and B to Sets C and D:

When comparing a vertex in set $A \cup B$ to a vertex in set $C \cup D$, a vertex in $A \cup B$ has either the same distance to the vertex $v_{\frac{n}{4}}$ as a vertex in $C \cup D$ has to $v_{\frac{3n}{4}}$, or a vertex in $A \cup B$ has a different distance to the vertex $v_{\frac{n}{4}}$ as a vertex in $C \cup D$ has to $v_{\frac{3n}{4}}$. As a result, we have two cases. 

\begin{itemize}
\item Case I:
A vertex in $A \cup B$ is the same distance to the vertex $v_{\frac{n}{4}}$, as a vertex in $C \cup D$ is to $v_{\frac{3n}{4}}$. 

Recall that the vertex $v_{\frac{n}{4}}$ is different from the vertex $v_{\frac{3n}{4}}$ because the vertex $v_{\frac{n}{4}}$ has a greater number of shortest paths to $V(\text{I})$ that begin with the edge $v_{\frac{n}{4}}v_{\frac{3n}{4}}$, compared to the vertex $v_{\frac{3n}{4}}$. (Note that the edge $v_{\frac{n}{4}}v_{\frac{3n}{4}}$ is unique because it connects two $C_3$s whose vertices are all in only one $C_3$, and all have a distance greater than one to another $C_3$). When comparing two vertices that are equidistant from $v_{\frac{n}{4}}$ and $v_{\frac{3n}{4}}$ respectively, we say that each vertex is $i$ away from $v_{\frac{n}{4}}$ and $v_{\frac{3n}{4}}$, respectively. Let $i=|\frac{n}{4}-x|$. Any $v_x$ distance $i$ away from $v_{\frac{n}{4}}$ or $v_{\frac{3n}{4}}$ will have a path to $V(\text{I})$ that passes through the edge $v_{\frac{n}{4}}v_{\frac{3n}{4}}$ in the $i+1$ position of the path. If we ensure that the edge $v_{\frac{n}{4}}, v_{\frac{3n}{4}}$ is used at the $i+1$ position of the path to $V(\text{I})$, we know that a vertex in $A \cup B$ will have a greater number of shortest paths that use the edge $v_{\frac{n}{4}}v_{\frac{3n}{4}}$ at the  $i+1$ position of the path to $V(\text{I})$, than the vertices in $C \cup D$. In this way, vertices in $A \cup B$ that are $i$ away from $v_{\frac{n}{4}}$ are unique from vertices in $C \cup D$ that are $i$ away from  $v_{\frac{3n}{4}}$. 

\item Case II: A vertex in $A \cup B$ is a different distance from the vertex $v_{\frac{n}{4}}$, than a vertex in $C \cup D$ is to $v_{\frac{3n}{4}}$.

Let a vertex in $C \cup B$ be distance $l$ from $v_{\frac{3n}{4}}$, where $l=|\frac{3n}{4}-x|$, and let a vertex in $A \cup B$ be distance $k$ from $v_{\left\lceil \frac{n}{4}\right\rceil }$, where $k=|\frac{3n}{4}-x|$, and $k \ne l$. A vertex $k$ from $v_{\left\lceil \frac{n}{4}\right\rceil }$ has a shortest path to $V(\text{I})$ that uses the edge $v_{\frac{n}{4}}v_{\frac{3n}{4}}$ at the $k+1$ position of the path to $V(\text{I})$. A vertex $l$ from $v_{\frac{3n}{4}}$ has a shortest path to$V(\text{I})$ that uses the edge $v_{\frac{n}{4}}v_{\frac{3n}{4}}$ at the $l+1$ position of the path to $V(\text{I})$. Since $k \ne l$, the vertices being compared will use the edge $v_{\frac{n}{4}}v_{\frac{3n}{4}}$ in their path to $V(\text{I})$ at different times. Since the edge $v_{\frac{n}{4}}v_{\frac{3n}{4}}$ is unique, and since the vertices in $A \cup B$ use the edge $v_{\frac{n}{4}}v_{\frac{3n}{4}}$ to get to $V(\text{I})$ at a different time than the vertices in $C \cup B$, the vertices in $A \cup B$ that are $k$ away from $v_{\frac{n}{4}}$ are unique from vertices in $C \cup D$ that are $l$ away from  $v_{\frac{3n}{4}}$. 

In this way, vertices in $A \cup B$ are unique from vertices in set $C \cup D$.
\end{itemize}

\item[$\circ$] Comparing vertices in Set A to Set B: 

The edge $v_4v_{\frac{n}{2}-4}$ is unique because $v_4$ and $v_{\frac{n}{2}-4}$ are unique vertices: $v_4$ is always distance four from $V(\text{III})$ and is always distance three from $V(\text{IV})$, and $v_{\frac{n}{2}-4}$ is always distance two from $V(\text{I})$ and is always distance two from $V(\text{IV})$. We compare the vertices in Set A to the vertices in Set B by comparing the path from $v_x$ to the vertex $v_{\frac{n}{2}-4}$, that uses the edge $v_4v_{\frac{n}{2}-4}$. We find that the path from any vertex $v_x$ in Set A to the vertex $v_{\frac{n}{2}-4}$, that uses the edge $v_4v_{\frac{n}{2}-4}$, is always $i$ edges short, where $i=x-3$, from a $2i$ cycle. Whereas, the path from any vertex $v_x$ in Set B to the vertex $v_{\frac{n}{2}-4}$, that uses the edge $v_4v_{\frac{n}{2}-4}$, is always $j$ edges short, where $j=\frac{n}{2}-4-x$, from a $2j+2$ cycle. When comparing vertices in Set A to vertices in Set B we assign an ordered pair $(r, s)$ to each $v_x$ in each set, where $s$ is the number of edges that $v_x$ is short of an $r$-cycle. In Set A each $v_x$ is assigned the ordered pair $(2i, i)$ and in Set B each $v_x$ is assigned the ordered pair $(2j+2, j)$. We will show that a vertex in Set A will never have the same ordered pair as a vertex in set B. There are two cases: 

\begin{itemize}
\item Case 1: $i=j$

When $i=j$, then $(2i, i)$ and $(2j+2,j)$  will never be the same, as if $i=j$, we know $2i \ne 2j+2$. Thus, the $(2i, i) \ne (2j+2,j)$, and the vertices in Set A are unique from the vertices in Set B. 

\item Case 2: $i \ne j$

If $i \ne j$, then $(2i, i)$ and $(2j+2,j)$ are different.  In this way the vertices in Set A are always unique from the vertices in Set B. 

\end{itemize}

In this way the vertices in Set A are always unique from the vertices in Set B.

\item[$\circ$] Comparing vertices within Set A:

We compare a vertex $v_x$ in Set A to all other vertices within Set A by studying the path from $v_x$ to the vertex $v_{\frac{n}{2}-4}$, that uses the edge $v_4v_{\frac{n}{2}-4}$. This path will always be $i$ edges short, where $i=x-3$, from a $2i$ cycle. Every $v_x$ in Set A has a unique $i$ value because $i=x-3$ and every vertex has a unique $x$ value assigned. Thus, every vertex in Set A is unique, as every vertex is short of a cycle by a unique number of edges. In this way, every vertex in Set A is unique. 

\item[$\circ$] Comparing vertices within Set B:

We compare a vertex $v_x$ in Set B to all other vertices within Set B by studying the path from $v_x$ to the vertex $v_{\frac{n}{2}-4}$, that uses the edge $v_4v_{\frac{n}{2}-4}$. This path will always be $j$ edges short of a cycle, where $j=\frac{n}{2}-4-x$, from a $2j+2$ cycle. Every $v_x$ in Set B has a unique $j$ value because $j=\frac{n}{2}-4-x$ and every vertex has a uniquet $x$ value assigned. Thus, every vertex in Set B is unique, as every vertex is short of a cycle by a unique number of edges. In this way, every vertex in Set B is unique. 

\item[$\circ$] Comparing vertices in Set D to Set C:

The edge $v_{n-3}v_{\frac{n}{2}+3}$ is unique because $v_{n-3}$ and $v_{\frac{n}{2}+3}$ are unique vertices: $v_{n-3}$ is in $V(\text{IV})$ and distance three from $n$, and $v_{\frac{n}{2}+3}$ is always distance one from $V(\text{IV})$ and is always distance three from $V(\text{II})$. We compare the vertices in Set D to the vertices in Set C by comparing the path from $v_x$ to the vertex $v_{\frac{n}{2}+3}$ that uses the edge $v_{n-3}v_{\frac{n}{2}+3}$. We find that the path from any vertex $v_x$ in Set D to the vertex $v_{\frac{n}{2}+3}$, that uses the edge $v_{n-3}v_{\frac{n}{2}+3}$, is always $m$ edges short, where $m=n-x-2$, from a $2m$ cycle. Whereas, the path from any vertex $v_x$ in Set C to the vertex $v_{\frac{n}{2}+3}$, that uses the edge $v_{n-3}v_{\frac{n}{2}+3}$, is always $q$ edges short, where $q=x-\frac{n}{2}+3$, from a $2q+2$ cycle. When comparing vertices in Set D to vertices in Set C we assign an ordered pair $(r, s)$ to each $v_x$ in each set, where $s$ is the number of edges that $v_x$ is short of an $r$-cycle. In Set C each $v_x$ is assigned the ordered pair $(2q+2, q)$ and in Set D each $v_x$ is assigned the ordered pair $(2m, m)$. We will show that a vertex in Set D will never have the same ordered pair as a vertex in set C. There are two cases: 

\begin{itemize}
\item Case 1: $m=q$

When $m=q$, then $(2m, m)$ and $(2q+2,q)$  will never be the same, as if $m=q$, we know $2m \ne 2q+2$. Thus, the $(2m, m) \ne (2q+2,q)$, and the vertices in Set D are unique from the vertices in Set C. 

\item Case 2: $m \ne q$

If $m \ne q$, then $(2m, m)$ and $(2q+2,q)$ are different.  In this way the vertices in Set D are always unique from the vertices in Set C. 

\end{itemize}

\noindent In this way the vertices in Set D are always unique from the vertices in Set C.

\item[$\circ$] Comparing vertices within Set C:

We compare a vertex $v_x$ in Set C to all other vertices within Set C by studying the path from $v_x$ to the vertex $v_{\frac{n}{2}-4}$, that uses the edge $v_{n-3} v_{\frac{n}{2}+3}$. This path will always be $m$ edges short, where $m=n-x-2$, from a $2m$ cycle. Every $v_x$ in Set C has a unique $m$ value because  $m=n-x-2$ and every vertex has a unique $x$ value assigned. Thus, every vertex in Set C is unique, as every vertex is short of a cycle by a unique number of edges. In this way, every vertex in Set C is unique. 

\item[$\circ$] Comparing vertices within Set D:

We compare a vertex $v_x$ in Set D to all other vertices within Set D by studying the path from $v_x$ to the vertex $v_{\frac{n}{2}-4}$, that uses the edge $v_{n-3} v_{\frac{n}{2}+3}$. This path will always be $q$ edges short, where $q=x-\frac{n}{2}+3$, from a $2q+2$ cycle. Every $v_x$ in Set D has a unique $q$ value because $q=x-\frac{n}{2}+3$ and every vertex has a unique $x$ value assigned. Thus, every vertex in Set D is unique, as every vertex is short of a cycle by a unique number of edges. In this way, every vertex in Set D is unique.

\noindent In this way, any vertex in any set A, B, C, and D, is unique from all other vertices. 

\end{itemize}

\noindent In this way, all vertices are unique in this construction of a quartic Hamiltonian graph on any $n \geq 20 $, $n \equiv 0  \; \text{mod} \; 4$. Therefore, the graph is asymmetric. 
\end{itemize}

%%%%%%%%%%%%%%%%%Case 2mod4

\subsubsection{When $n \equiv 2 \; \text{mod} \; 4$}

\textbf{Vertices that are always contained in a 3-Cycle}

\vspace{-3pt} \noindent Vertices $v_1, \; v_2, \; v_{\frac{n}{2} - 2}, \; v_{\frac{n}{2} - 1}, \; v_{\frac{n}{2}}, \; v_{\frac{n}{2} + 1}, \; v_{\frac{n}{2} + 2}, \; v_{n-3}, \; v_{n-2}, \; v_{n-1}, \; v_n$ are the only vertices contained in any $C_3$ subgraph.

\begin{itemize}
\item \vspace{-5pt} The vertex $v_{\frac{n}{2}}$ is unique because it is the only vertex contained in two distinct $C_3$s.

\item \vspace{-5pt} The vertices $v_2$ and $v_{n-3}$ are the only vertices contained in a $C_3$ that is not distance one away from another $C_3$.

\begin{itemize}
\vspace{-5pt}
\item $v_2$ is distance three away from the vertex $v_{\frac{n}{2}}$, while the vertex $v_{n-3}$ is not. 
\end{itemize}

\item \vspace{-5pt} The vertices $v_1, \; v_{\frac{n}{2} - 1}, \; v_{\frac{n}{2} + 1}, \; v_{n-2}, \; v_{n-1}, \; v_{\frac{n}{2}-2}, \; v_{\frac{n}{2}+2} \; \text{and} \; v_n$ are all contained in a $C_3$ and are one away from a $C_3$.

%item[$\circ$] $v_1$ and $v_n$ are distance two away from $v_{\frac{n}{2}}$.

\begin{itemize}
\item \vspace{-5pt} $v_1$ and $v_n$ are adjacent to $v_2$.

\begin{itemize}
\item \vspace{-3pt}$v_1$ is only contained in one $C_4$ whereas $v_n$ is contained in two $C_4$s. 
\end{itemize}
\end{itemize}
\vspace{-7pt}
%\item[$\circ$] $v_{n-2}$ and $v_{n-1}$ are distance three away from $v_{\frac{n}{2}}$

\begin{itemize}
\item \vspace{-5pt}$v_{\frac{n}{2} - 1}$, $v_{\frac{n}{2} + 1}$, $v_{\frac{n}{2}-2}$, and \;$ v_{\frac{n}{2}+2}$ are adjacent to the vertex $v_{\frac{n}{2}}$.

\begin{itemize}
\item \vspace{-1pt} $v_{\frac{n}{2} - 1}$ is adjacent to $v_n$, while $v_{\frac{n}{2} + 1}$, $v_{\frac{n}{2}-2}$, and \;$ v_{\frac{n}{2}+2}$ are not.

\item \vspace{-1pt} $v_{\frac{n}{2} + 1}$ is adjacent to $v_1$, while $v_{\frac{n}{2} - 1}$, $v_{\frac{n}{2}-2}$, and \;$ v_{\frac{n}{2}+2}$ are not.

\item \vspace{-1pt} $v_{\frac{n}{2} - 2}$ is distance two away from $v_n$, while $v_{\frac{n}{2} + 2}$ is not.
\end{itemize}

\item \vspace{-3pt} $v_{n-1}$ and $v_{n-2}$ are not adjacent to $v_{\frac{n}{2}}$ nor $v_2$.

\begin{itemize}
\item \vspace{-3pt} $v_{n-1}$ is adjacent to $v_n$, while the vertex $v_{n-2}$ is not.
\end{itemize}

\end{itemize}
\end{itemize}

\noindent \textbf{Vertices That are not contained in a 3-cycle}
\newline Let any vertex $v_k$ such that  $n-6 \leq k \leq 6$, $\frac{n}{2}-5 \leq k \leq \frac{n}{2}+4$ be an element of $\mathcal{U}$.

In order to show that the vertices, $6<x<\frac{n}{2}-5$ and $\frac{n}{2}+4<x<n-6$, are unique, we begin with a vertex $v_x$ and check to see if its adjacent vertices are elements of $\mathcal{U}$. If all of the vertices adjacent to $v_x$ are unique, we can say that the vertex is unique and conclude it belongs to $\mathcal{U}$. 

We begin with the vertex $v_{6+t}$, where $t=0$ and visit the vertices adjacent to $v_{6+t}$, that is: $v_{5+t}$, $v_{7+t}$, $v_{n-8-t}$, and $v_{n-4-t}$. We check if the adjacent vertices are elements of $\mathcal{U}$. If at least two of these vertices belong to $\mathcal{U}$, then compare the distance from each of the vertices to $v_{6+t-3}$. We find that the shortest path from $v_{7+t}$ to $v_{6+t-3}$ is always two, and the shortest path from $v_{n-8-t}$ to $v_{6+t-3}$ is always four. Thus, the vertices are different, as they have different distances to $v_{6+t-3}$. We can now conclude the vertices $v_{5+t}$, $v_{7+t}$, $v_{n-8-t}$, and $v_{n-4-t}$ belong to $\mathcal{U}$. 

If there are fewer than two vertices belonging to $\mathcal{U}$, then check the distance from each vertex to $v_{6+t-3}$. We find that the shortest paths from the vertices $v_{7+t}$, $v_{n-8-t}$, and $v_{n-4-t}$ to the vertex $v_{6+t-3}$ are, two, four, and two, respectively. Since two vertices have the same distance to $v_{6+t-3}$, we compare their distances to $v_{6+t-4}$. Since the shortest path distance from $v_{7+t}$ to $v_{6+t-4}$ is always three, and since the shortest path from $v_{n-4-t}$ to $v_{6+t-4}$ is always one, the vertices are unique, as each vertex has a unique distance to both $v_{6+t-3}$ and $v_{6+t-4}$. We can now say the vertices $v_{5+t}$, $v_{7+t}$, $v_{n-8-t}$, and $v_{n-4-t}$ belong to $\mathcal{U}$.

We then add one to $t$ and check the adjacent vertices as described, until we reach the vertex $v_{\frac{n}{2}-6}$. We know the vertex $v_{\frac{n}{2}-6}$ is unique because all of its adjacent vertices are elements of $\mathcal{U}$. At this point, the cardinality of $\mathcal{U}$ is equal to the number of vertices in the graph, which implies the graph is asymmetric. 
%----------------------------------------------------------------------

%----------------------------------------------------------------------

\section{Conclusion}

\hspace{5pt} We constructed and presented an infinite family of cubic and quartic Hamiltonian graphs of even orders, starting at $n=12$. We note that the complement of an asymmetric $3$-regular Hamiltonian graph on $n$ vertices is an asymmetric $(n-4)$-regular graph. An application of Dirac's Theorem shows that this graph is Hamiltonian. This covers all even $n\geq 8$. The complement of an asymmetric $4$-regular Hamiltonian graph on $n$ vertices is an asymmetric $(n-5)$-regular graph. An application of Dirac's Theorem shows that this graph is Hamiltonian. This covers all odd $n\geq 7$. 

In Figure 6 we present an asymmetric $5$-regular Hamiltonian graph with 12 vertices.
\begin{figure}[h!]
\begin{center}
\includegraphics[scale=0.3]{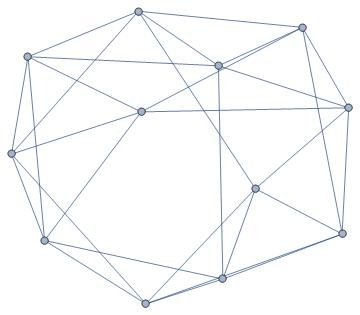}
\end{center}
\caption{An asymmetric $5$-regular Hamiltonian graph with 12 vertices}
\end{figure}
The complement of this graph will be an asymmetric $6$-regular graph on 12 vertices. It follows by Dirac's Theorem that this graph is Hamiltonian. 

\hspace{5pt} We established the existence of infinite families of $k$-regular asymmetric Hamiltonian graphs for $k=3$ and $k=4$. It would be interesting to determine for which $k>4$ there exists an infinite family of asymmetric $k$-regular Hamiltonian graphs.

%------------------------------------------------------------------------------

\section*{Acknowledgements}
This research was supported by the National Science Foundation Research Experiences for Undergraduates Award 1659075. Part of this work originated in an undergraduate project at Whitman College by Luke Rodriguez written under the direction of Professor Barry Balof.

%------------------------------------------------------------------------------
\bibliographystyle{plain}
\bibliography{citations}
%-----------------------------------------------------------------------------
\end{document}